\newcommand{\R}{\mathbb{R}}
\newcommand{\Z}{\mathbb{Z}}
\newcommand\hstar{h^\ast}
\newcommand{\Pyr}{\operatorname{Pyr}}
\newcommand{\Ehr}{\operatorname{Ehr}}
\newcommand{\ehr}{\operatorname{ehr}}
\newtheorem{theorem}{Theorem}
\newtheorem*{conj*}{Conjecture}
\newtheorem{proposition}[theorem]{Proposition}
\newtheorem{lemma}[theorem]{Lemma}
\newtheorem{corollary}[theorem]{Corollary}
\theoremstyle{definition}
\newtheorem{example}[theorem]{Example}
\begin{document}

\title{Inequalities for $f^*$-vectors of Lattice Polytopes}

\author{Matthias Beck}
\address{Department of Mathematics\\
         San Francisco State University\\
         San Francisco, CA 94132\\
         U.S.A.}
\email{becksfsu@gmail.com}

\author{Danai Deligeorgaki}
\address{Institutionen f\"or Matematik, KTH, SE-100 44 Stockholm, Sweden}
\email{danaide@kth.se}

\author{Max Hlavacek}
\address{Department of Mathematics, UC Berkeley, Berkeley, CA 94720, U.S.A.}
\email{mhlava@math.berkeley.edu}

\author{Jer\'onimo Valencia-Porras}
\address{University of Waterloo, Waterloo, ON, Canada}
\email{j2valenc@uwaterloo.ca}

\date{19 October 2022}

\keywords{Lattice polytope, Ehrhart polynomial, Gorenstein polytope, $f^*$-vector, $h^*$-vector, unimodality.}

\subjclass[2010]{Primary 52B20; Secondary 05A15, 52C07.}

\thanks{We thank the organizers of \emph{Research Encounters in Algebraic and Combinatorial
Topics} (REACT 2021), where our collaboration got initiated. We are grateful to Michael
Joswig, Matthias Schymura and Lorenzo Venturello for helpful conversations.}

\maketitle


\begin{abstract}
    The Ehrhart polynomial $\text{ehr}_P(n)$ of a lattice polytope $P$ counts the number
of integer points in the $n$-th integral dilate of $P$. The $f^*$-vector of $P$,
introduced by Felix Breuer in 2012, is the vector of coefficients of $\text{ehr}_P(n)$
with respect to the binomial coefficient basis  $
\left\{\binom{n-1}{0},\binom{n-1}{1},...,\binom{n-1}{d}\right\}$, where $d = \dim P$.
   Similarly to $h/h^*$-vectors, the $f^*$-vector of $P$ coincides with the $f$-vector of
its unimodular triangulations (if they exist). 
   We present several inequalities that hold among the coefficients of $f^*$-vectors of polytopes.
   These inequalities resemble striking similarities with existing inequalities for the
coefficients of $f$-vectors of simplicial polytopes; e.g., the first half of the
$f^*$-coefficients increases and the last quarter decreases. 
   Even though $f^*$-vectors of polytopes are not always unimodal, there are several families of polytopes that carry the unimodality property. 
   We also show that for any polytope with a given Ehrhart $h^*$-vector, there is a polytope with the same $h^*$-vector whose $f^*$-vector is unimodal.

\end{abstract}

\section{Introduction}

For a $d$-dimensional \emph{lattice polytope} $P \subset \R^{d}$ (i.e., the convex hull of finitely many points in $\Z^d$) and a positive integer $n$, let $\ehr_P(n)$ denote the number of integer lattice points in $nP$. 
Ehrhart's famous theorem~\cite{ehrhartpolynomial} says that $\ehr_P(n)$ evaluates to a
polynomial in $n$.
Similar to the situations with other combinatorial polynomials, it is useful to express
$\ehr_P(n)$ in different bases; here we consider two such bases consisting of binomial
coefficients:
\begin{equation}\label{eq1}
  \ehr_P(n)
  \ = \ \sum_{k=0}^d h^*_k \binom{n+d-k}{d}
  \ = \ \sum_{k=0}^d f^*_k \binom{n-1}{k} \, .
\end{equation}
We call $(f^*_0, f^*_1, \dots, f^*_d)$ the \emph{$f^*$-vector} and 
$(h^*_0, h^*_1, \dots, h^*_d)$ the \emph{$h^*$-vector} of~$P$.
Stanley~\cite{stanleymagiccohenmac} proved that the $\hstar$-vector of any lattice polytope
is nonnegative (whereas the coefficients of $\ehr_P(n)$ written in the standard monomial basis can be negative).
Breuer~\cite{breuerehrhartf} proved that the $f^*$-vector of any lattice polytopal complex is
nonnegative (whereas the $\hstar$-vector of a complex
can have negative coefficients); his motivation was that various combinatorially-defined polynomials can be
realized as Ehrhart polynomials of complexes and so the nonnegativity of the $f^*$-vector
yields a strong constraint for these polynomials.

The $f^*$- and $h^*$-vector can also be defined through the \emph{Ehrhart series} of $P$:
\[
  \Ehr_P(z)
  \ := \ 1 + \sum_{n\geq1} \ehr_P(n) \, z^n
  \ = \ \frac{ \sum_{ k=0 }^d h^*_k \, z^k }{(1-z)^{d+1}} 
  \ = \ 1 + \sum_{ k=0 }^d f^*_k \left( \frac{ z }{ 1-z } \right)^{k+1} .
\]
It is thus sometimes useful to add the definition $f_{-1}^* := 1$.
The polynomial $\sum_{ k=0 }^d h^*_k \, z^k$ is the \emph{$h^*$-polynomial} of $P$, and its degree
is the \emph{degree} of~$P$.

The $f^*$- and $h^*$-vectors share the same relation as $f$- and $h$-vectors of 
polytopes/polyhe\-dral complexes, namely
\begin{equation}\label{eq2}
    \sum_{k=0}^d h_k^* \, z^k \ = \ \sum_{k=0}^{d+1}f_{k-1}^* \, z^{k}(1-z)^{d-k+1}
\end{equation}
\begin{equation}{\label{h* and f*}}
    h_k^* \ = \ \sum_{j=-1}^{k-1} (-1)^{k-j-1}\binom{d-j}{k-j-1}f_j^* 
\end{equation}
\begin{equation}{\label{f* and h*}}
    f_k^* \ = \ \sum_{j=0}^{k+1} \binom{d-j+1}{k-j+1}h_j^* \, .
\end{equation}

The (very special) case that $P$ admits a unimodular triangulation yields the strongest
connection between $f^*$/$h^*$-vectors and $f$/$h$-vectors: in this case the
$f^*$/$h^*$-vector of $P$ equals the $f$/$h$-vector of the triangulation, respectively.

\begin{example}\label{example with cube}
Let $P$ be the $2$-dimensional cube $[-1,1]^2$.
The unimodular triangulation of $P$ shown in Figure \ref{fig: small cube}, has $f$-vector
$(f_0,f_1,f_2)=(9,16,8)$, as $f_i$ counts its $i$-dimensional faces. Equivalently,
\[f^*(P)=(9,16,8) \, , \]
and one easily checks that \eqref{eq1} yields the familiar Ehrhart polynomial $\ehr_P(n) =
(2n+1)^2$.
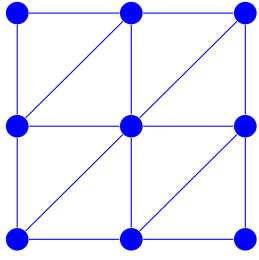
\begin{figure}
\centering
  \begin{tikzpicture}[-
  ,node distance=1.5cm,
            main node/.style={circle, blue,fill,  inner sep=3pt}]
\centering
  \node[main node] (2) { };
  \node[main node] (1) [left of=2] { };
  \node[main node] (3) [ right of=2] { };
    \node[main node] (4) [ above of=3] { };
    \node[main node] (5) [ left of=4] { };
     \node[main node] (6) [ left of=5] { };
     \node[main node] (7) [ above of=6] { };
     \node[main node] (8) [ right of=7] { };
     \node[main node] (9) [ right of=8] { };
   
  \path[every node/.style={font=\sffamily\small}]
    [color=blue] (2) edge node { } (1)
    (3) edge node { } (2)
        (2) edge node { } (5)
            (6) edge node { } (1)
                (3) edge node { } (4)
        (6) edge node { } (5)
            (5) edge node { } (4)
                (6) edge node { } (7)
                    (5) edge node { } (8)
                        (4) edge node { } (9)
        (9) edge node { } (8)
            (7) edge node { } (8)
                (5) edge node { } (1)
                    (5) edge node { } (9)
                        (2) edge node { } (4)
                            (6) edge node { } (8);
    \end{tikzpicture}
\caption{A (regular) unimodular triangulation of the cube $[-1,1]^2$.}\label{fig: small cube}
\end{figure}
\end{example}
\begin{example}\label{eg}
The $f^*$-vector of a $d$-dimensional unimodular simplex $\Delta$ equals
\[ \left[ \binom{d+1}{1},\binom{d+1}{2}, \dots, \binom{d+1} {d+1} \right],\]
coinciding with the $f$-vector of $\Delta$ considered as a simplicial complex.
If we append this vector by $f_{-1}^* = 1$, 
it gives the only instance of a \emph{symmetric} $f^*$-vector of a lattice polytope $P$, since the
equality $f_{-1}^*=f^*_d$ implies that $h_i^*=0$ for all $1\leq i \leq d$. 
\end{example}

There has been much research on (typically linear) constraints for the $h^*$-vector of a
given lattice polytope (see, e.g., \cite{stapledondelta,stapledonadditive}). On the other
hand, $f^*$-vectors seem to be much less studied, and our goal is to rectify that situation. Our motivating question is how close the $f^*$-vector of a
given lattice polytope is to being \emph{unimodal}, i.e., the $f^*$-coefficients increase
up to some point and then decrease.
Our main results are as follows.

\begin{theorem}\label{thm:main}
Let $d \geq 2$ and let $P$ be a $d$-dimensional lattice polytope. Then
\begin{enumerate}[{\rm (a)}]
\item\label{f_star_unimod_triang_first_half}
   $f^*_0<f^*_1<\cdots<f^*_{\left\lfloor \frac{d}{2}\right\rfloor-1}\leq f^*_{\left\lfloor
\frac{d}{2}\right\rfloor}$;   
\vspace{5pt}
\item\label{f_star_unimod_triang_last_quarter}
  $f^*_{ \left\lfloor \frac{3d}{4} \right\rfloor } > f^*_{ \left\lfloor \frac{3d}{4}
\right\rfloor + 1 } > \dots > f^*_d$; 
\vspace{5pt}
\item\label{f_star_0ismin}
    $f^*_k \leq f^*_{d-1-k}$ for $ 0 \leq  k \leq  \frac{(d-3)}{2}$.
\end{enumerate}
\end{theorem}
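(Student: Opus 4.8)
The plan is to push everything through the $h^*$-vector. By \eqref{f* and h*} we have $f_k^*=\sum_{j\ge 0}\binom{d-j+1}{k-j+1}h_j^*$, and the only inputs I would use are Stanley's positivity $h_j^*\ge 0$ together with $h_0^*=1$; each of the three claims then becomes the (non)negativity of an explicit integer combination $\sum_j a_j\,h_j^*$. For \ref{f_star_unimod_triang_first_half} I would compute
$$ f_{k+1}^*-f_k^* \;=\; \sum_{j\ge 0}\left[\binom{d-j+1}{k-j+2}-\binom{d-j+1}{k-j+1}\right]h_j^* , $$
where the bracket is a difference of two consecutive entries in row $d-j+1$ of Pascal's triangle, hence $\ge 0$ exactly when $j\ge 2k+2-d$; and $2k+2-d\le 0$ as soon as $k\le\lfloor d/2\rfloor-1$, so every coefficient is nonnegative in the claimed range. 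Since $h_0^*=1$, strictness reduces to the coefficient $\binom{d+1}{k+2}-\binom{d+1}{k+1}$ being positive, which holds for $k\le\lfloor d/2\rfloor-2$ (there $k+1<d/2$) and degenerates to an equality at $k=\lfloor d/2\rfloor-1$ when $d$ is even (the equality case being the unimodular simplex, as in Example~\ref{eg}) — precisely why the last comparison is only claimed as $\le$. Part \ref{f_star_0ismin} goes the same way once one notices $\binom{d-j+1}{d-1-k-j+1}=\binom{d-j+1}{k+1}$, giving $f_{d-1-k}^*-f_k^*=\sum_{j\ge0}\bigl[\binom{d-j+1}{k+1}-\binom{d-j+1}{k-j+1}\bigr]h_j^*$; the $j=0$ term vanishes, and for $j\ge1$ one checks the bracket is $\ge0$ by comparing $k+1$, or its reflection $d-j-k$ in row $d-j+1$, against $k-j+1$, with the hypothesis $k\le\frac{d-3}{2}$ (so $2k+1\le d$) making the comparison come out right in both the ``$k+1$ left of center'' and ``$k+1$ right of center'' cases.

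Part \ref{f_star_unimod_triang_last_quarter} is the substantive one. The analogous computation
$$ f_k^*-f_{k+1}^* \;=\; \sum_{j\ge 0}\left[\binom{d-j+1}{k-j+1}-\binom{d-j+1}{k-j+2}\right]h_j^* $$
does \emph{not} have nonnegative coefficients: the bracket is $\ge0$ only for $j\le 2k+2-d$ and is negative for the remaining indices up to $j=k+2$, and the tail $h^*$-coefficients are unbounded, so Stanley positivity cannot close the argument by itself. The plan is to absorb the negative tail using the further linear inequalities obeyed by $h^*$-vectors — the Stanley and Hibi inequalities (say in Stapledon's refined form), which bound sums of high-index $h^*$-coefficients by sums of low-index ones. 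This is meant to be the analogue of Björner's proof that $f$-vectors of simplicial polytopes decrease in the last quarter, with the Dehn--Sommerville symmetry of the $h$-vector replaced by the one-sided Stanley--Hibi inequalities. Heuristically it should work because, once $k\ge\lfloor 3d/4\rfloor$, there are only $\lceil d/4\rceil$ negative-coefficient indices, all bunched near $k$ where the binomial rows $d-j+1$ are short (so those coefficients are small), whereas the nonnegative block $0\le j\le 2k+2-d$ is long and carries large binomial coefficients; trading blocks of the former against blocks of the latter should leave a strictly positive multiple of $h_0^*=1$, whose coefficient $\binom{d+1}{k+1}-\binom{d+1}{k+2}$ is positive since $k+1>\frac{d+1}{2}$.

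I expect the bookkeeping in this last step to be the main obstacle: one must choose exactly which Stanley/Hibi inequalities to invoke and then verify that, after the substitutions, every residual coefficient of $h_j^*$ is $\ge 0$ for \emph{every} $k$ with $\lfloor 3d/4\rfloor\le k\le d-1$, not merely asymptotically. I also expect the constant $\tfrac34$ to be forced by exactly this estimate — it should be the largest slope for which the top index $k+2$ appearing in $f_{k+1}^*$ stays near enough to the sign-change point $2k+2-d$ that the nonnegative block still has enough room to cover the negative tail — so pinning the threshold at $\lfloor 3d/4\rfloor$ rather than something larger is tied to carrying out that estimate carefully.
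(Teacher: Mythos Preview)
Your arguments for parts (a) and (c) are correct and coincide with the paper's: both follow directly from \eqref{f* and h*} together with $h_j^*\ge 0$ and $h_0^*=1$, exactly as you describe (the paper splits the sum in (c) into two ranges, but the case analysis is the same as yours).

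For part (b) your plan is also the paper's plan, and you have correctly located the only real difficulty. Two concrete ingredients carry out the ``bookkeeping'' you flag. First, your heuristic ``short rows, small coefficients'' is made precise as the uniform bound
\[
\left|\binom{n}{k}-\binom{n}{k-1}\right|\ \ge\ \left|\binom{n-j}{k}-\binom{n-j}{k-1}\right|
\qquad (j\ge 0,\ k\le n+1-j,\ n\ne 2k-1),
\]
proved as an elementary standalone lemma: every negative bracket in your sum is dominated in absolute value by the \emph{single} positive bracket sitting at the sign-change index. In the paper's shift $f_{k-1}^*-f_k^*$ that pivotal bracket is $B:=\binom{2k-d+1}{3k-2d}-\binom{2k-d+1}{3k-2d+1}$, so the negative tail is bounded below by $-B\sum_{j=2k+1-d}^{d}h_j^*$. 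Second, a short telescoping of the positive block shows it contributes at least $B\sum_{j=0}^{2d-2k}h_j^*$ plus the strictly positive leftover $\bigl(\binom{d}{k}-\binom{d}{k+1}\bigr)h_0^*$. Then a \emph{single} Hibi inequality \eqref{eq:Hibi}, with $m=2d-2k-1$, gives $\sum_{j=0}^{2d-2k}h_j^*\ge\sum_{j=2k+1-d}^{d}h_j^*$ and finishes the proof; this instance of \eqref{eq:Hibi} is valid exactly when $k\ge\lfloor 3d/4\rfloor+1$, which is the origin of the $3d/4$ threshold you anticipated. So no menu of Stapledon-type inequalities is needed---one Hibi inequality suffices once the binomial-difference lemma packages all the negative brackets against a single positive one.
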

Examples \ref{example with cube} and \ref{eg} yield cases of polytopes for which the inequalities $ f^*_{ \left\lfloor \frac{3d}{4}
\right\rfloor -1 }< f^*_{ \left\lfloor \frac{3d}{4}
\right\rfloor  }$ and $f^*_{\left\lfloor
\frac{d}{2}\right\rfloor}>f^*_{\left\lfloor
\frac{d}{2}\right\rfloor+1}$
hold, respectively.

We record the following immediate consequence of Theorem \ref{thm:main}.

\begin{corollary}\label{cor:endpointslowest}
Let $P$ be a $d$-dimensional lattice polytope. Then for $0\leq k \leq d$,
\[
  f^*_{k} \geq \min \{ f^*_{0}, f^*_{d} \} \, .
\]
\end{corollary}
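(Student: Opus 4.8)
The plan is to read the bound straight off the three inequality chains of Theorem~\ref{thm:main}, by partitioning the index set $\{0,1,\dots,d\}$ into the rising range $0\le k\le\lfloor d/2\rfloor$, the falling range $\lfloor 3d/4\rfloor\le k\le d$, and the remaining middle range $\lfloor d/2\rfloor< k<\lfloor 3d/4\rfloor$, and checking $f^*_k\ge\min\{f^*_0,f^*_d\}$ on each piece.

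On the rising range, part~\ref{f_star_unimod_triang_first_half} of Theorem~\ref{thm:main} gives the weak chain $f^*_0\le f^*_1\le\cdots\le f^*_{\lfloor d/2\rfloor}$, so $f^*_k\ge f^*_0\ge\min\{f^*_0,f^*_d\}$ there. On the falling range, part~\ref{f_star_unimod_triang_last_quarter} gives $f^*_{\lfloor 3d/4\rfloor}\ge\cdots\ge f^*_d$, so $f^*_k\ge f^*_d\ge\min\{f^*_0,f^*_d\}$ there. These two ranges already cover everything for small $d$ — a quick check shows the middle range is empty for $2\le d\le 6$ — so the genuine content is confined to $d\ge 7$.

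For an index $k$ in the middle range I would invoke part~\ref{f_star_0ismin} after the reindexing $j=d-1-k$. Since $k\ge\lfloor d/2\rfloor+1\ge\tfrac{d+1}{2}$ and $k\le d-1$, the index $j=d-1-k$ satisfies $0\le j\le\tfrac{d-3}{2}$, so part~\ref{f_star_0ismin} applies and yields $f^*_{d-1-k}=f^*_j\le f^*_{d-1-j}=f^*_k$. Moreover $j=d-1-k\le\tfrac{d-3}{2}<\lfloor d/2\rfloor$, so the rising chain of part~\ref{f_star_unimod_triang_first_half} gives $f^*_j\ge f^*_0$. Chaining these, $f^*_k\ge f^*_{d-1-k}\ge f^*_0\ge\min\{f^*_0,f^*_d\}$, which settles the middle range and hence the corollary.

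Everything here is bookkeeping once Theorem~\ref{thm:main} is available; the only place to be careful is the floor-function arithmetic that makes the reindexing in part~\ref{f_star_0ismin} legal and keeps the new index inside the rising range, i.e., verifying $\lfloor d/2\rfloor+1\ge\tfrac{d+1}{2}$ and $\tfrac{d-3}{2}<\lfloor d/2\rfloor$ for both parities of $d$ (both hold with room to spare). I expect no substantive obstacle beyond this routine casework.
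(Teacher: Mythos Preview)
Your argument is correct and is exactly the verification the paper leaves implicit: the paper records the corollary as an ``immediate consequence of Theorem~\ref{thm:main}'' and supplies no further proof. Your partition into the rising range, the falling range, and the middle range---the last handled via the reflection $j=d-1-k$ so that part~(\ref{f_star_0ismin}) together with part~(\ref{f_star_unimod_triang_first_half}) applies---is precisely what that phrase unpacks to.
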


\begin{theorem}\label{thm:lowdimunimodal}
The $f^*$-vector of a $d$-dimensional lattice polytope, where $1\leq d \leq 13$, is
unimodal. On the other hand, there exists a $15$-dimensional lattice simplex with nonunimodal
$f^*$-vector.
\end{theorem}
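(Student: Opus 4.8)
The plan is to treat the two halves of the statement separately.

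\emph{Unimodality for $1\le d\le 13$.} The first move is to use Theorem~\ref{thm:main} to confine any failure of unimodality to a short middle window. By part~\ref{f_star_unimod_triang_first_half} the coefficients strictly increase on the indices $0,1,\dots,\lfloor d/2\rfloor$, and by part~\ref{f_star_unimod_triang_last_quarter} they strictly decrease on $\lfloor 3d/4\rfloor,\dots,d$; in particular the extreme windows cannot contain a strict local minimum. Hence $(f^*_0,\dots,f^*_d)$ can fail to be unimodal only if it has a strict valley at some index $m$ with $\lfloor d/2\rfloor< m< \lfloor 3d/4\rfloor$. For $1\le d\le 6$ we have $\lfloor 3d/4\rfloor-\lfloor d/2\rfloor\le 1$, so no such $m$ exists and the claim is immediate.

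For $7\le d\le 13$ there are only one or two candidate indices $m$, and for each of them I would rule out a valley as follows. Assume $f^*_{m-1}>f^*_m<f^*_{m+1}$; rewriting the two relevant consecutive differences in the $h^*$-basis via~\eqref{f* and h*} gives
\[
  \sum_{j} c_j(m-1)\,h^*_j\ <\ 0\ <\ \sum_j c_j(m)\,h^*_j ,
\]
where $c_j(\ell)=\binom{d-j+1}{\ell-j+2}-\binom{d-j+1}{\ell-j+1}$ is negative for small $j$, positive for large $j$, and changes sign near $j\approx 2m-d$. Taking a suitable nonnegative combination of these two inequalities — ideally $\sum_j\bigl(c_j(m-1)-\lambda\,c_j(m)\bigr)h^*_j$ for a $\lambda>0$ chosen so that all coefficients are nonnegative — and feeding in $h^*_0=1$ and $h^*_j\ge 0$ should force the combination to be simultaneously negative and nonnegative, a contradiction. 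The obstruction to this clean version is the behaviour of the high-degree coefficients $h^*_j$ with $j$ just above $m$ (where the binomial prefactors degenerate), so the argument must also invoke known linear inequalities on $h^*$-vectors (Stanley's, Hibi's, and Stapledon's) to control those coefficients. I expect that making all of this work — a careful comparison of partial sums of binomial rows against those inequalities, done for each $d$ in the range — is the main obstacle, and that it is exactly where the estimates degrade once $d\ge 14$, which is why the statement is restricted to $d\le 13$.

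\emph{A nonunimodal example in dimension $15$.} For the second half I would exhibit an explicit $15$-dimensional lattice simplex $S$ and compute $f^*(S)$ directly from $h^*(S)$ via~\eqref{f* and h*}. The simplex should be of the kind used by Payne and others to manufacture irregular $h^*$-vectors (for instance a free join of small simplices, or a simplex $\conv(0,e_1,\dots,e_{14},v)$ with a cleverly chosen last vertex $v$), arranged so that $h^*(S)$ has essentially a single pronounced spike $h^*_j$ located so that the two symmetric unimodal summands of $f^*_k=\sum_i\binom{d-i+1}{k-i+1}h^*_i$ — the one coming from $h^*_0=1$, peaked near $k\approx d/2$, and the one coming from the spike, peaked near $k\approx (j+d)/2$ — are separated by a strict valley lying in the interior of the window $\bigl(\lfloor d/2\rfloor,\lfloor 3d/4\rfloor\bigr)$. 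The work here is twofold: first, pinning down an $h^*$-vector of this shape that is actually realized by a lattice simplex (which one checks via the lattice points of the fundamental parallelepiped), and second, tuning the size and position of the spike so that the induced valley is strict while the resulting $f^*$-vector still obeys all of Theorem~\ref{thm:main} (which it must). Balancing ``spike large enough to create a second bump that rises above the valley'' against ``spike low enough that the binomial prefactors do not wash it out'' is the delicate point; once a candidate $S$ is found, the verification is a finite computation.
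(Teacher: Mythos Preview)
Your plan matches the paper's proof essentially line for line: for $d\le 6$ the paper argues exactly as you do, and for $7\le d\le 13$ it carries out, case by case, precisely the linear-combination step you sketch, showing that $(f^*_{m}-f^*_{m+1})-\lambda(f^*_{m-1}-f^*_m)$ (with $\lambda\in\{1,\,2,\,\tfrac45,\,\tfrac54,\,\tfrac12,\,\tfrac73\}$ depending on $d$ and $m$) is bounded below by a nonnegative multiple of a Hibi partial-sum expression $\sum_{j\le k}h^*_j-\sum_{j\ge d-k+1}h^*_j$, with Stapledon's refinement invoked only at $d=13$. For the counterexample the paper uses Higashitani's simplex $\conv\{0,e_1,\dots,e_{14},w\}$ with $w=(1,\dots,1,131,\dots,131,132)$ (seven $1$'s, seven $131$'s), whose $h^*$-vector is $(1,0,\dots,0,131,0,\dots,0)$ with the spike at position~$8$ --- exactly the ``single pronounced spike'' mechanism you describe --- and then simply writes out the resulting $f^*$-vector.
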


Even though $f^*$-vectors are quite different from $f$-vectors of polytopes, the above
results resemble striking similarities with existing theorems on $f$-vectors.
Namely, Bj\"orner~\cite{bjornerunimodalityconj,bjornerfacenumbers,bjornerpartialunimodal} 
proved that the $f$-vector of a simplicial
$d$-polytope satisfies all inequalities in
Theorem~\ref{thm:main} 
 (with the $^*$s removed, and the last coordinate dropped). 
 In fact, Bj\"orner also showed that in the $f$-analogue of Theorem~\ref{thm:main}\eqref{f_star_unimod_triang_last_quarter} the decrease starts from $\lfloor{\frac{3(d-1)}{4}}\rfloor-1$ instead of $\lfloor \frac{3d}{4} \rfloor$, and that the inequalities in Theorem \ref{thm:main}\eqref{f_star_unimod_triang_first_half} and
\eqref{f_star_unimod_triang_last_quarter} cannot be further extended, by constructing a simplicial polytope with $f$-vector that
peaks at $f_j$, for any $\lfloor \frac d 2 \rfloor \le j \le \lfloor \frac{ 3(d-1) }{ 4 }\rfloor-1$.

Corollary~\ref{cor:endpointslowest} compares the entries of the $f^*$-vector with the minimum between the first and the last entry. Note that a similar relation for $f$-vectors of polytopes was recently proven by
Hinman~\cite{hinman}, answering a question of B\'ar\'any from the 1990s. (Hinman also
proved a stronger result, namely certain lower bounds for the ratios $\frac{ f_k }{ f_0
}$ and $\frac{ f_k }{ f_{d-1} }$.)

The $f$-analogue of Theorem~\ref{thm:lowdimunimodal} is again older:
Bj\"orner~\cite{bjornerunimodalityconj} showed that the $f$-vector of any
simplicial $d$-polytope is unimodal for $d \le 15$ (later improved to $d\leq19$ by Eckhoff~\cite{{eckhoffdimension19}}), and he and Lee~\cite{billeralee}
produced examples of 20-dimensional simplicial polytopes with nonunimodal $f$-vectors.

For a special class of polytopes we can increase the range in
Theorem~\ref{thm:main}\eqref{f_star_unimod_triang_last_quarter}.
A lattice polytope $P$ is \emph{Gorenstein of index $g$} if 
\begin{itemize}
\item $nP$ contains no interior lattice points for $1 \le n < g$,
\item $gP$ contains a unique interior lattice point, and 
\item $\ehr_{ P } (n-g)$ equals the number of interior lattice points in $nP$, for $n > g$.
\end{itemize}
This is equivalent to $P$ having degree $d+1-g$ and a symmetric $h^*$-vector (with respect to its degree).

\begin{theorem}\label{thm:gorenstein}
Let $P$ be a $d$-dimensional Gorenstein polytope of index $g$. Then
\[
   f^*_{k-1} >f^*_{k}
  \qquad \text{ for } \ \tfrac 1 2 \left( d + 1 + \left\lfloor \tfrac {d+1-g} 2 \right\rfloor
\right) \le k \le d \, .
\]
\end{theorem}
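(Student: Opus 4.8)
The plan is to reduce Theorem~\ref{thm:gorenstein} to Theorem~\ref{thm:main}\eqref{f_star_unimod_triang_last_quarter} applied to an auxiliary polytope of smaller dimension. The Gorenstein hypothesis says that the $h^*$-polynomial of $P$ has degree $s := d+1-g$ and is palindromic: $h^*_k = h^*_{s-k}$ for $0 \le k \le s$. A palindromic polynomial of degree $s$ that is a valid $h^*$-polynomial should itself be (up to the usual shift) the $h^*$-polynomial of some $s$-dimensional polytope $Q$ — more precisely, one expects that the truncated vector $(h^*_0, h^*_1, \dots, h^*_s)$ is the $h^*$-vector of an $s$-dimensional lattice polytope, or at least that one can build such a $Q$ (e.g. via a lattice pyramid construction in reverse, since pyramids preserve the $h^*$-polynomial while raising the dimension). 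So first I would establish: given a Gorenstein $P$ of dimension $d$ and index $g$, there is an $s$-dimensional lattice polytope $Q$ with the same $h^*$-polynomial, where $s = d+1-g$.

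Next I would compare the $f^*$-vectors of $P$ and $Q$. Using \eqref{f* and h*}, $f^*_k(P) = \sum_{j=0}^{k+1} \binom{d-j+1}{k-j+1} h^*_j$ whereas $f^*_k(Q) = \sum_{j=0}^{k+1} \binom{s-j+1}{k-j+1} h^*_j$, with the $h^*_j$'s the same (and supported on $0 \le j \le s$). The dimension enters only through the top binomial factor, so the two $f^*$-vectors are related by a fixed linear transformation depending on $d$ and $s$. In fact lattice pyramids give a clean combinatorial handle here: if $P$ has the same $h^*$-vector as the $(d-s)$-fold pyramid over $Q$, then there is an explicit formula expressing $f^*_k(\Pyr^{m} Q)$ in terms of the $f^*$-coefficients of $Q$ (a binomial-type convolution coming from $\left(\frac{z}{1-z}\right)$ bookkeeping in the Ehrhart series). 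I would work out that the map sending the $f^*$-vector of $Q$ to that of its $m$-fold pyramid is given by $f^*_k(\Pyr^m Q) = \sum_{i} \binom{m}{k-i} f^*_{i}(Q)$ or a similar Pascal-type recursion, and verify it preserves the property "strictly decreasing on a terminal segment," with the segment's starting index shifting by exactly $m = d-s = g-1$ relative to where Theorem~\ref{thm:main}\eqref{f_star_unimod_triang_last_quarter} places it for $Q$.

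Then I would simply invoke Theorem~\ref{thm:main}\eqref{f_star_unimod_triang_last_quarter} for the $s$-dimensional polytope $Q$: its $f^*$-vector is strictly decreasing from index $\lfloor 3s/4 \rfloor$ to $s$. Tracking this through the pyramid map (each pyramid step preserves strict decrease on the tail and extends it), the $f^*$-vector of $P$ becomes strictly decreasing from index $\lfloor 3s/4 \rfloor + (d-s)$ to $d$. Finally I would check the arithmetic identity $\lfloor 3s/4 \rfloor + (d-s) = \tfrac12\bigl(d+1+\lfloor \tfrac{d+1-g}{2}\rfloor\bigr)$, or rather that the former is at most the latter, so that the claimed range $\tfrac12(d+1+\lfloor \tfrac{d+1-g}{2}\rfloor) \le k \le d$ is contained in the range where strict decrease holds; substituting $s=d+1-g$ and simplifying floors should make this an elementary (if slightly fiddly) case check modulo~$4$.

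The main obstacle I anticipate is the first step: producing the lower-dimensional polytope $Q$ with the prescribed palindromic $h^*$-polynomial, or else avoiding it by arguing purely at the level of $h^*$/$f^*$ linear algebra. If one cannot realize $Q$ geometrically, the fallback is to prove the inequality $f^*_{k-1} > f^*_k$ for $P$ directly from \eqref{f* and h*} and the palindromy $h^*_j = h^*_{s-j}$: write $f^*_{k-1} - f^*_k = \sum_j \bigl[\binom{d-j+1}{k-j} - \binom{d-j+1}{k-j+1}\bigr] h^*_j$, symmetrize the sum by pairing $h^*_j$ with $h^*_{s-j}$, and show the resulting coefficient of each pair is positive in the stated range of $k$ — this is the same strategy presumably used to prove Theorem~\ref{thm:main}\eqref{f_star_unimod_triang_last_quarter} itself, now with the added leverage of symmetry pushing the decrease further left. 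Either way, the nonnegativity of $h^*$ (Stanley) and the combinatorial identity~\eqref{f* and h*} are the only inputs beyond Theorem~\ref{thm:main}.
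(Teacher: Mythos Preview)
Your primary route---realize the palindromic $h^*$-polynomial by an $s$-dimensional polytope $Q$ (with $s=d+1-g$), apply Theorem~\ref{thm:main}\eqref{f_star_unimod_triang_last_quarter} to $Q$, then push through $d-s$ pyramids---does not reach the stated bound. The Pascal recursion $f^*_k(\Pyr Q)=f^*_k(Q)+f^*_{k-1}(Q)$ is correct, and it does shift the starting index of strict decrease by exactly one per pyramid. But this yields strict decrease only for
\[
  k \ \ge \ \left\lfloor\tfrac{3s}{4}\right\rfloor + 1 + (d-s) \ = \ d - \left\lceil\tfrac{s}{4}\right\rceil + 1,
\]
whereas the theorem asserts it for $k \ge \tfrac12\bigl(d+1+\lfloor\tfrac{s}{2}\rfloor\bigr)$. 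These agree only when $g=1$ (i.e., $s=d$); for $g\ge 2$ your range is strictly smaller. For instance, with $d=8$, $g=5$, $s=4$, the theorem gives $k\ge 6$ but your reduction gives only $k\ge 8$. The ``elementary case check modulo $4$'' you planned would not go through.

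Your fallback, however, is exactly the paper's proof. One writes
\[
  f^*_{k-1}-f^*_k \ = \ \sum_{j=0}^{s}\left(\binom{d-j+1}{k-j}-\binom{d-j+1}{k-j+1}\right)h^*_j,
\]
pairs the terms $j\leftrightarrow s-j$ using $h^*_j=h^*_{s-j}$, and shows each paired coefficient
\[
  \binom{d-j+1}{k-j}-\binom{d-j+1}{k-j+1}+\binom{d-s+j+1}{k-s+j}-\binom{d-s+j+1}{k-s+j+1}
\]
is nonnegative (and strictly positive for $j=0$) in the stated range of $k$. The key tool is Lemma~\ref{lem:binomdiff}, which compares $\bigl|\binom{n}{k}-\binom{n}{k-1}\bigr|$ with $\bigl|\binom{n-j}{k}-\binom{n-j}{k-1}\bigr|$; this is the only nontrivial ingredient beyond Stanley nonnegativity and~\eqref{f* and h*}, as you anticipated.
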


Going even further, for a certain class of polytopes we can prove unimodality of the
$f^*$-vector, a consequence of the following refinement of
Theorem~\ref{thm:main}(\ref{f_star_unimod_triang_last_quarter}) for polytopes with degree $<
\frac d 2$.

\begin{theorem}\label{h* degree}
Let $P$ be a $d$-dimensional lattice polytope with degree $\le s$. Then
  \[ f^*_{k-1} > f^*_{k} \qquad \text{ for } \ \lceil\tfrac{d+s}{2}\rceil \leq k \leq
d \, , \]
unless the degree of $P$ is $0$, i.e., $P$ is a unimodular simplex with $f^*$-vector as in Example~\ref{eg}.
\end{theorem}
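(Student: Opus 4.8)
The plan is to expand $f^*_{k-1}-f^*_k$ in the $h^*$-basis via \eqref{f* and h*} and to exploit Stanley's nonnegativity of the $h^*$-vector. Since $P$ has degree $\le s$ we have $h^*_j=0$ for $j>s$, so (discarding the trivial case $s>d$, where the asserted range of $k$ is empty, and noting that for $s\le d$ the bound $k\ge\lceil\tfrac{d+s}{2}\rceil$ forces $k\ge s$) identity \eqref{f* and h*} gives
\[
  f^*_{k-1}-f^*_k \ = \ \sum_{j=0}^{s}\left[\binom{d-j+1}{k-j}-\binom{d-j+1}{k-j+1}\right]h^*_j .
\]
It therefore suffices to show that every bracketed coefficient is $\ge 0$ throughout the range $\lceil\tfrac{d+s}{2}\rceil\le k\le d$ and that the coefficient of $h^*_0$ is strictly positive; since $h^*_j\ge 0$ for all $j$ and $h^*_0=1$, this yields $f^*_{k-1}-f^*_k>0$.

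For the coefficients, recall that $\binom{m}{a}\ge\binom{m}{a+1}$ exactly when $m\le 2a+1$, with equality precisely when $m=2a+1$; hence $\binom{m}{a}-\binom{m}{a+1}>0$ exactly when $m\le 2a$. Taking $m=d-j+1$ and $a=k-j$, the bracketed coefficient of $h^*_j$ is $\ge 0$ iff $j\le 2k-d$, and $>0$ iff $j\le 2k-d-1$. Now the hypothesis $k\ge\lceil\tfrac{d+s}{2}\rceil$ gives $2k\ge d+s$, so $2k-d\ge s\ge j$ for every $j$ appearing in the sum, which makes all coefficients nonnegative; and since the degree-$0$ case is excluded we have $s\ge 1$, hence $2k\ge d+1$, so the coefficient $\binom{d+1}{k}-\binom{d+1}{k+1}$ of $h^*_0$ is strictly positive. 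Consequently
\[
  f^*_{k-1}-f^*_k \ \ge \ \binom{d+1}{k}-\binom{d+1}{k+1} \ > \ 0 .
\]

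To see why the degree-$0$ hypothesis is needed, note that when $\deg P=0$ the $h^*$-polynomial equals the constant $1$, so $P$ is a unimodular simplex with $f^*$-vector $\bigl(\binom{d+1}{1},\dots,\binom{d+1}{d+1}\bigr)$ as in Example~\ref{eg}; taking $s=0$, the would-be inequality fails with equality at $k=\lceil\tfrac d2\rceil$ when $d$ is even. I expect the only delicate point to be the bookkeeping with the binomial inequalities near the left endpoint $k=\lceil\tfrac{d+s}{2}\rceil$, where the coefficient of $h^*_s$ degenerates to $0$ and only $h^*_0,\dots,h^*_{s-1}$ contribute strictly — but there is no substantive obstacle here, the statement being essentially immediate from \eqref{f* and h*} together with the nonnegativity of the $h^*$-vector.
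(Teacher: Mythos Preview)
Your proof is correct and follows essentially the same route as the paper's: expand $f^*_{k-1}-f^*_k$ via \eqref{f* and h*}, truncate at $j=s$, and use $2k\ge d+s$ together with $h^*_j\ge 0$ and $h^*_0=1$ to see that every term is nonnegative and the $j=0$ term is strictly positive. The only cosmetic difference is that the paper rewrites each bracket as $\frac{2k-d-j}{k+1-j}\binom{d+1-j}{k-j}$ to read off the sign, whereas you use the equivalent criterion $\binom{m}{a}\ge\binom{m}{a+1}\iff m\le 2a+1$.
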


This theorem implies that lattice $d$-polytopes of degree $s$ satisfying $s^2 - s - 1 \le
\frac{d}{2}$ have a
unimodal $f^*$-vector (see Proposition~\ref{proposition for pyramid} below for details).
One family with asymptotically small degree, compared to the dimension, is given by taking iterated pyramids.
Given a polytope $P \subset \R^d$, we denote by $\Pyr(P) \subset \R^{ d+1 }$ the convex hull of $P$ and the $(d+1)$st unit vector.
It is well known that $P$ and $\Pyr(P)$ have the same $h^*$-vector (ignoring an extra 0), and so we conclude:

\begin{corollary}
If $P$ is any lattice polytope then $\Pyr^n(P)$ has unimodal $f^*$-vector for sufficiently
large~$n$.
\end{corollary}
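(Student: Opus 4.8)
\emph{Proof plan.} The engine of the argument is the observation that passing to a pyramid does not change the degree. As recalled just before the statement, for any lattice polytope $Q$ one has $\Ehr_{\Pyr(Q)}(z)=h^*_Q(z)/(1-z)^{\dim Q+2}$, so $\Pyr(Q)$ has the same $h^*$-polynomial as $Q$ (the "extra $0$" being the vanishing top $h^*$-coefficient), and in particular $\deg\Pyr(Q)=\deg Q$. Iterating, if we write $d_0:=\dim P$ and $s:=\deg P$, then $\Pyr^n(P)$ is a lattice polytope of dimension $d_0+n$ and degree exactly $s$ for every $n\ge0$. Thus along the family $\Pyr^n(P)$ the dimension grows without bound while the degree stays pinned at $s$.

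Now I would invoke the unimodality criterion stated in the paragraph following Theorem~\ref{h* degree}: a lattice $d$-polytope of degree $s$ satisfying $s^2-s-1\le\tfrac d2$ has a unimodal $f^*$-vector (this is the precise consequence of Theorem~\ref{h* degree} recorded in Proposition~\ref{proposition for pyramid}). Since $s$ is fixed, it suffices to choose $n$ with $d_0+n\ge 2\bigl(s^2-s-1\bigr)$; then $d:=d_0+n$ satisfies the hypothesis, and, because the inequality $s^2-s-1\le d/2$ only becomes easier as $d$ increases, it holds for every larger $n$ as well. Hence $\Pyr^n(P)$ has unimodal $f^*$-vector for all sufficiently large $n$. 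The one genuinely degenerate case, $s=0$, I would dispatch directly: then $P$, and therefore each $\Pyr^n(P)$, is a unimodular simplex, whose $f^*$-vector is the binomial sequence $\bigl(\binom{D+1}{1},\dots,\binom{D+1}{D+1}\bigr)$ of Example~\ref{eg}, which is unimodal; this also matches the criterion since $s^2-s-1=-1\le d/2$, but it is cleanest to note it separately because Theorem~\ref{h* degree} excludes degree $0$.

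As for the main obstacle: at this point there is essentially none, since all the hard work has been done in Theorem~\ref{h* degree} and its refinement. The only things to verify are the degree invariance under $\Pyr$ (the routine Ehrhart-series computation above) and the trivial bookkeeping that turns "degree fixed, dimension $\to\infty$" into the numerical hypothesis $s^2-s-1\le d/2$. It is worth noting why one cannot simply combine Theorem~\ref{thm:main}\eqref{f_star_unimod_triang_first_half} (increase through index $\lfloor d/2\rfloor$) with Theorem~\ref{h* degree} (strict decrease from index $\lceil(d+s)/2\rceil$): those two ranges abut only when $s\le 2$, and for larger degree there is a window of about $s/2$ middle coordinates controlled by neither statement — which is exactly the gap the quadratic bound $s^2-s-1\le d/2$ in Proposition~\ref{proposition for pyramid} is tailored to close, and which is why the corollary is phrased with "sufficiently large $n$" rather than for all $n$.
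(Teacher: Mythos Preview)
Your proposal is correct and follows essentially the same approach as the paper, which simply combines the well-known fact that $\Pyr$ preserves the $h^*$-vector (hence the degree) with the quadratic criterion $s^2-s-1\le d/2$ from Proposition~\ref{proposition for pyramid}. Your explicit treatment of the degree-$0$ case is a tidy addition the paper leaves implicit via Example~\ref{eg}, but otherwise the arguments coincide.
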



\section{Proofs}

We start with a few warm-up proofs which only use the fact that $h^*$-vectors are
nonnegative.

\begin{proof}[Proof of Theorem~\ref{thm:main}(\ref{f_star_unimod_triang_first_half})]
It follows by (\ref{f* and h*}) and the nonnegativity of $h^*(P)$ that, for $ 1 \leq k \leq \left\lfloor \frac{d}{2} \right\rfloor$,
\begin{align*}
  f_k^*- f_{k-1}^* 
  \ = \ \sum_{j=0}^{k+1} \left(\binom{d+1-j}{k+1-j}-\binom{d+1-j}{k-j}\right)h_j^*
  \ \geq   0.
\end{align*}
In fact, $ f_k^*- f_{k-1}^*$ is bounded below by $\left(\binom{d+1}{k+1}-\binom{d+1}{k}\right)h_0^*>0$ for $1\leq k < \left\lfloor \frac{d}{2}\right\rfloor$,
since $h^*_0=1$.
\end{proof}

\begin{proof}[Proof of Theorem~\ref{thm:main}(\ref{f_star_0ismin})]
For  $ 0 \leq  k \leq  \frac{(d-3)}{2}$,
equation (\ref{f* and h*}) gives
 \begin{align*}
\ & f_{d-1-k}^*- f_k^*
 \ =  \ \sum_{j=0}^{d-k} \left(\binom{d+1-j}{d-k-j}-\binom{d+1-j}{k+1-j}\right)h_j^*
 \\ &=  \ \sum_{j=0}^{d-1-2k} \left(\binom{d+1-j}{k+1}-\binom{d+1-j}{k+1-j}\right)h_j^*
  +\sum_{j=d-2k}^{d-k} \left(\binom{d+1-j}{d-k-j}-\binom{d+1-j}{k+1-j}\right)h_j^*.
 \end{align*}
We have $\binom{d+1-j}{k+1}-\binom{d+1-j}{k+1-j}\geq 0$ since
$k+1-j\leq k+1\leq \frac{d+1-j}{2}$ holds for $0\leq j \leq d-1-2k$. Similarly, $\binom{d+1-j}{d-k-j}-\binom{d+1-j}{k+1-j}\geq 0$ holds because $k+1-j\leq d-k-j\leq \frac{d+1-j}{2}$
for all $d-2k\leq j$. Therefore, it follows by the nonnegativity of $h^*$-vectors that $f_{d-1-k}^*- f_k^*\geq 0$.
\end{proof}

\begin{proof}[Proof of Theorem~\ref{h* degree}]
Since $h^*_j=0$ for $j\geq s+1$,  (\ref{f* and h*}) gives
    \begin{align*}f_{k-1}^*- f_{k}^* = \sum_{j=0}^s\left( \binom{d+1-j}{k-j}- \binom{d+1-j}{k+1-j}\right)h_j^*=\sum_{j=0}^s \frac{ 2k-d-j}{k+1-j}\binom{d+1-j}{k-j}h_j^*.\end{align*}
For $\frac{d+s}{2}\leq k \leq d$, we have $k+1-j>0$ and $2k-d-j> 0$ for all
$j=0,...,s-1$, and $k+1-j>0, \;2k-d-j\geq 0$ for $j=s$. Therefore, the claim follows by the
nonnegativity of $h^*$-vectors and the positivity of $h_0^*$.
\end{proof}

\begin{proposition}\label{proposition for pyramid} Let $P$ be a $d$-dimensional lattice polytope that has degree at most $s$ for some positive $s$.
If $d\geq2s^2-2s-2$ then the $f^*$-vector of $P$ is unimodal with a (not necessarily "sharp") peak at $f^*_p$, where $\lfloor\frac{d}{2}\rfloor\leq p\leq \lceil\frac{d+s}{2}\rceil-1$.
\end{proposition}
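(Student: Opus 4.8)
The plan is to interpolate between the two monotonicity statements already in hand. Theorem~\ref{thm:main}(\ref{f_star_unimod_triang_first_half}) gives $f^*_0<f^*_1<\cdots<f^*_{\lfloor d/2\rfloor-1}\le f^*_{\lfloor d/2\rfloor}$, and Theorem~\ref{h* degree}, applied with the actual degree of $P$, gives $f^*_{\lceil(d+s)/2\rceil-1}>\cdots>f^*_d$; the only exception is that $P$ is a unimodular simplex, in which case $f^*_k=\binom{d+1}{k+1}$ is already unimodal with a (possibly nonsharp) peak at $f^*_{\lfloor d/2\rfloor}$. So the only comparisons left to control concern the window $I:=\{\lfloor d/2\rfloor,\dots,\lceil(d+s)/2\rceil-1\}$, and, writing $\Delta_k:=f^*_k-f^*_{k-1}$, it suffices to show that as $k$ grows the sign of $\Delta_k$ switches at most once, from nonnegative to negative: the peak is then at $f^*_p$ with $p:=\max\{k:\Delta_k\ge0\}$, and the endpoints of $I$ are exactly the bounds $\lfloor d/2\rfloor\le p\le\lceil(d+s)/2\rceil-1$ asserted.

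The engine is the uniform per-summand estimate
\[
 \binom{d+1-j}{k+2-j}-\binom{d+1-j}{k+1-j}\ \le\ \frac{d+1-k}{k+1}\left(\binom{d+1-j}{k+1-j}-\binom{d+1-j}{k-j}\right),
\]
which I would prove for $\lfloor d/2\rfloor<k<\lceil(d+s)/2\rceil$ and $0\le j\le s$. Granting it, recall from \eqref{f* and h*} and $h^*_j=0$ for $j>s$ that $f^*_k=\sum_{j=0}^{s}\binom{d+1-j}{k+1-j}h^*_j$ (valid verbatim with the convention $\binom{n}{m}=0$ for $m<0$); since the $h^*_j$ are nonnegative, summing the estimate against them gives $\Delta_{k+1}\le\frac{d+1-k}{k+1}\Delta_k$, and because $\frac{d+1-k}{k+1}>0$ for $k\le d$ this yields the implication $\Delta_k<0\Rightarrow\Delta_{k+1}<0$ on the whole range $\lfloor d/2\rfloor<k<\lceil(d+s)/2\rceil$. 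Combined with $\Delta_k\ge0$ for $k\le\lfloor d/2\rfloor$ (Theorem~\ref{thm:main}(\ref{f_star_unimod_triang_first_half})) and $\Delta_k<0$ for $k\ge\lceil(d+s)/2\rceil$ (Theorem~\ref{h* degree}), this propagates negativity of $\Delta$ from its first negative index all the way to $k=d$, so the sign change happens exactly once and at an index in $I$; unimodality with peak in $I$ follows.

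To prove the displayed estimate I would substitute the Pascal-type identities $\binom{d+1-j}{k+1-j}=\frac{d+1-k}{k+1-j}\binom{d+1-j}{k-j}$ and $\binom{d+1-j}{k+2-j}=\frac{(d+1-k)(d-k)}{(k+1-j)(k+2-j)}\binom{d+1-j}{k-j}$; dividing out the common positive factor $\frac{d+1-k}{k+1-j}\binom{d+1-j}{k-j}$, the estimate collapses to $\frac{d+j-2k-2}{k-j+2}\le\frac{d+j-2k}{k+1}$, and (clearing the positive denominators $k-j+2$ and $k+1$) to the single inequality $u(j-1)\le2k+2$, where $u:=d+j-2k$. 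For $j\in\{0,1\}$ or $u\le0$ this is immediate; for $j\ge2$ and $u\ge1$, the constraint $\lfloor d/2\rfloor<k<\lceil(d+s)/2\rceil$ forces $1\le u\le s-1$ and $j-1\le s-1$, whence $u(j-1)\le(s-1)^2$, while $2k+2\ge2\lfloor d/2\rfloor+4\ge d+3\ge(2s^2-2s-2)+3=2s^2-2s+1\ge(s-1)^2$, the central step being precisely the hypothesis $d\ge2s^2-2s-2$. This closes the estimate; the location bounds $\lfloor d/2\rfloor\le p$ and $p\le\lceil(d+s)/2\rceil-1$ are exactly what Theorems~\ref{thm:main}(\ref{f_star_unimod_triang_first_half}) and~\ref{h* degree} supply.

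The crux — and the sole place the quadratic-in-$s$ hypothesis is used — is this per-summand binomial inequality; the remainder is bookkeeping with the two previously proven theorems. Its delicate feature is the demand of uniformity over the whole window of $k$: at the left end $k=\lfloor d/2\rfloor+1$ the two quantities $u=d+j-2k$ and $j$ can be of size about $s$ simultaneously, so $u(j-1)$ is of order $s^2$ and must be absorbed by $2k+2\approx d$, which is exactly why a lower bound on $d$ quadratic in $s$ is natural. (When $s\le3$ the window $I$ has at most two elements and the proposition follows from Theorems~\ref{thm:main}(\ref{f_star_unimod_triang_first_half}) and~\ref{h* degree} with no extra input; the hypothesis then holds automatically once $d\ge2$, and the remaining cases $d\le1$ are trivial.)
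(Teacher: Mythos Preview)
Your proof is correct and follows the same overall strategy as the paper: reduce via Theorems~\ref{thm:main}(\ref{f_star_unimod_triang_first_half}) and~\ref{h* degree} to the window $\{\lfloor d/2\rfloor,\dots,\lceil(d+s)/2\rceil-1\}$, then establish a per-summand binomial inequality (summed against the nonnegative $h^*_j$) which propagates a ``decreasing'' step forward.

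The difference lies only in the auxiliary inequality. The paper proves coefficientwise \emph{concavity}, namely that each summand in $2f^*_{k}-f^*_{k-1}-f^*_{k+1}$ is nonnegative, and then bounds the resulting quadratic expression in $i$ and $j$ below by $d-2s^2+2s+2$. You instead prove the multiplicative relation $\Delta_{k+1}\le\frac{d+1-k}{k+1}\,\Delta_k$ per summand and reduce it to the clean condition $u(j-1)\le 2k+2$ with $u=d+j-2k$. Both devices yield the implication $\Delta_k<0\Rightarrow\Delta_{k+1}<0$ needed for unimodality, and both invoke the quadratic-in-$s$ hypothesis at the same point. A small bonus of your route: since you only need $(s-1)^2\le 2k+2$ and $2k+2\ge d+3$, your argument in fact goes through under the weaker assumption $d\ge s^2-2s-2$; you simply did not exploit this, using $d\ge 2s^2-2s-2\ge s^2-2s-2$ instead. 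One minor wording remark: Theorem~\ref{h* degree} already applies with the bound $s$ (its hypothesis is ``degree $\le s$''), so ``applied with the actual degree of $P$'' is unnecessary except to flag the degree-$0$ exception, which you handle correctly.
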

 
\begin{proof}
By Theorems \ref{thm:main}\eqref{f_star_unimod_triang_first_half} and \ref{h* degree}, it suffices to show that $f^*_{\lfloor \frac{d}{2}\rfloor+i}\geq f^*_{\lfloor \frac{d}{2}\rfloor+i+1}$ implies $f^*_{\lfloor \frac{d}{2}\rfloor+i+1}\geq f^*_{\lfloor \frac{d}{2}\rfloor+i+2}$, i.e., that $2f^*_{\lfloor \frac{d}{2}\rfloor+1+i}- f^*_{\lfloor \frac{d}{2}\rfloor+2+i}-f^*_{\lfloor \frac{d}{2}\rfloor+i}\geq 0$ for $0\leq i\leq\frac{s}{2}-2$.

As $h^*_j=0$ for $j\geq s+1$, by  (\ref{f* and h*}) we can express
$2f^*_{\lfloor \frac{d}{2}\rfloor+1+i}- f^*_{\lfloor \frac{d}{2}\rfloor+2+i}-f^*_{\lfloor \frac{d}{2}\rfloor+i}$ as the sum \begin{align*} &\sum_{j=0}^s \left(2\binom{d+1-j}{\lfloor \frac{d}{2}\rfloor+2-j+i}- \binom{d+1-j}{\lfloor \frac{d}{2}\rfloor+3-j+i}-\binom{d+1-j}{\lfloor \frac{d}{2}\rfloor+1-j+i}\right)h_j^*\\ 
&=\sum_{j=0}^s \left(\frac{2\left(\lceil\frac{d}{2}\rceil-i\right)}{\lfloor\frac{d}{2}\rfloor +2-j+i} -\frac{(\lceil\frac{d}{2}\rceil-i)(\lceil\frac{d}{2}\rceil-1-i)}{(\lfloor\frac{d}{2}\rfloor+2-j+i)(\lfloor\frac{d}{2}\rfloor+3-j+i)}-1 \right)\binom{d+1-j}{\lfloor \frac{d}{2}\rfloor+1-j+i}h_j^*.\end{align*}
Since $ d\geq\text{max}\{2s^2-2s-2,0\}$ 
we have that $(\lfloor\frac{d}{2}\rfloor+3-j+i)( \lfloor\frac{d}{2}\rfloor+2-j+i)$ is positive  for $j=0,...,s$ and since $h^*_j$ is nonnegative, it remains to show that 
\begin{align}& 2({\lceil\tfrac{d}{2}\rceil-i})({\lfloor\tfrac{d}{2}\rfloor +3-j+i})-{(\lceil\tfrac{d}{2}\rceil-i)(\lceil\tfrac{d}{2}\rceil-1-i)}-({\lfloor\tfrac{d}{2}\rfloor+2-j+i})({\lfloor\tfrac{d}{2}\rfloor +3-j+i}) \nonumber \\  
&\quad={d-(2j-1)(\lceil\tfrac{d}{2}\rceil-\lfloor\tfrac{d}{2}\rfloor)+4i(\lceil\tfrac{d}{2}\rceil-\lfloor\tfrac{d}{2}\rfloor) -12i+4ij-4i^2}- 6+5j -j^2 \label{second part} \\
&\quad= \begin{cases}
d - 4 i^2 + 4 i j - 12 i - j^2 + 5 j - 6 & \text{ if $d$ is even,}\\
d - 4 i^2 + 4 i j - 8 i - j^2 + 3 j - 1 & \text{ if $d$ is odd,}
\end{cases}
\nonumber
\end{align} is nonnegative for $0\leq j\leq s$.
Indeed, the conditions $j\leq s$ and $i\leq \frac{s}{2}-2$ imply that \eqref{second part} is bounded below by
\begin{align*} 
d - 4 i^2 - 12 i - j^2  - 6 
\ge d - 4 (\tfrac s 2 - 2)^2 - 12 (\tfrac s 2 - 2) - s^2  - 6
= d - 2 s^2 + 2s + 2,
\end{align*}
which is nonnegative by assumption.
\end{proof}

The next proofs use more than just the nonnegativity of $h^*$-vectors. The first
result needs the following elementary lemma on binomial coefficients.

\begin{lemma}\label{lem:binomdiff}
Let $j, k, n$ be positive integers such that $k \le n+1-j$. Then
\[
  \left| \binom n k - \binom n {k-1} \right| \ \ge \ 
  \left| \binom {n-j} k - \binom {n-j}{k-1} \right|
\]
whenever $n\neq 2k-1$.
\end{lemma}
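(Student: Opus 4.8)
The plan is to turn this into a one‑variable monotonicity statement. From $\binom{m}{k}=\frac{m-k+1}{k}\binom{m}{k-1}$ one gets the identity
\[
  k\left(\binom m k - \binom m {k-1}\right) \ = \ (m-2k+1)\binom m{k-1},
\]
so, setting $F(m):=|m-2k+1|\,\binom m{k-1}=k\bigl|\binom m k - \binom m {k-1}\bigr|$, the claimed inequality is exactly $F(n)\ge F(n-j)$. The hypothesis $k\le n+1-j$ together with $j\ge1$ forces $n-j\ge k-1$ and $n\ge k$, so both $F(n)$ and $F(n-j)$ are genuine products of nonnegative quantities, and it suffices to understand the sequence $F(k-1),F(k),F(k+1),\dots$.

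First I would determine the shape of $F$. On $\{k-1,\dots,2k-1\}$ we have $F(m)=(2k-1-m)\binom m{k-1}$, and expanding $\binom m{k-1}$ via Pascal's rule yields the closed form
\[
  F(m)-F(m-1)\ =\ \frac{k\,(2k-2-m)}{k-1}\binom{m-1}{k-2}
\]
for $k\ge2$ (the case $k=1$ is trivial, the block $\{k-1,\dots,2k-2\}$ then being a single point). Consequently $F$ is non-decreasing on $\{k-1,k,\dots,2k-2\}$, with maximum $F(2k-2)=\binom{2k-2}{k-1}$; it then drops to $F(2k-1)=0$; and on $\{2k-1,2k,2k+1,\dots\}$ it is non-decreasing again, since there $F(m)=(m-2k+1)\binom m{k-1}$ is a product of two non-decreasing nonnegative factors, with $F(2k)=\binom{2k}{k-1}$.

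Given this picture, $F(n)\ge F(n-j)$ for $n\ne2k-1$ follows by locating $n$ and $n-j$ relative to $2k-1$. If $n\le2k-2$ then $k-1\le n-j\le n\le2k-2$ and monotonicity on the lower block finishes it; if $n-j\ge2k-1$ then $2k-1\le n-j\le n$ and monotonicity on the upper block finishes it. The only remaining case is $n\ge2k$ with $k-1\le n-j\le2k-2$, and there the chain
\[
  F(n)\ \ge\ F(2k)\ =\ \binom{2k}{k-1}\ \ge\ \binom{2k-2}{k-1}\ =\ F(2k-2)\ \ge\ F(n-j)
\]
does it; the hypothesis $n\ne2k-1$ is used exactly here, to forbid $n=2k-1$ in this last case — the single spot where $F$ jumps downward. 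I expect the Pascal-rule computation behind the closed form for $F(m)-F(m-1)$, and hence the monotonicity of $F$ on $\{k-1,\dots,2k-2\}$, to be the one genuinely computational step; the boundary inequality $\binom{2k}{k-1}\ge\binom{2k-2}{k-1}$ and the casework are then routine.
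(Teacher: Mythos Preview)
Your argument is correct and follows essentially the same strategy as the paper: both reduce to one-step monotonicity of $\left|\binom{m}{k}-\binom{m}{k-1}\right|$ on each side of the sign change at $m=2k-1$, together with the bridge inequality $\binom{2k}{k-1}\ge\binom{2k-2}{k-1}$ for the crossing case. Your packaging via the auxiliary function $F(m)=|m-2k+1|\binom{m}{k-1}$ and the explicit Pascal-rule formula for $F(m)-F(m-1)$ makes the casework slightly cleaner than the paper's direct factorial manipulations, but the underlying idea is the same.
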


\begin{proof}
It suffices to prove the statement for the cases $i)$ $j=1$
and the quantities $\binom n k - \binom n {k-1}$ and $\binom {n-1} k - \binom {n-1}{k-1}$
having the same sign, and $ii)$ the point when the signs change, i.e., $n = 2k$ and $j=2$.

To show case $i)$, we simplify
\begin{align*}
  \left| \binom n k - \binom n {k-1} \right|
    &= \ \frac{(n-1)!}{k!(n-k)!}\, \frac{n}{n-k+1}\left| n-2k+1 \right|
\end{align*}
and
\[
  \left| \binom {n-1} k - \binom {n-1} {k-1} \right|
  \ = \ \frac{(n-1)!}{k!(n-k)!}\left| n-2k  \right| . 
\]
If
$n \ge 2k$ then the inequalities $$\frac{n}{n-(k-1)}(n-2k+1) \ \geq \ n-2k+1 > n-2k$$ 
imply that
\begin{equation}\label{eq:j=1}
  \left| \binom n k - \binom n {k-1} \right| \ > \ \left| \binom {n-1} k - \binom {n-1} {k-1} \right|.
\end{equation}
If $n \le 2k-2$, we have $k(-2k+2+n) \le 0$ which is
equivalent to
\[
    \frac{n}{n-(k-1)}(2k-n-1) \ \geq \ 2k-n
\]
and so again~\eqref{eq:j=1} holds as a weak inequality.

To show case $ii)$, we compute
\[
  \left| \binom {2k} k - \binom{2k}{k-1} \right|
  \ = \ \frac{(2k)!}{k!(k+1)!} = \frac{(2k-2)!}{k!(k-1)!}\, \frac{2k(2k-1)}{k(k+1)}  
\]
and
\[
  \left| \binom {2k-2} {k} - \binom{2k-2}{k-1} \right|
  \ = \ \frac{(2k-2)!}{k!(k-1)!} \, .
\]
Since $2(2k-1) \ge (k+1)$ for any positive $k$, we conclude that
\[
  \left| \binom {2k} k - \binom{2k}{k-1} \right|
  \ \ge \ \left| \binom {2k-2} {k} - \binom{2k-2}{k-1} \right| . \qedhere
\]
\end{proof}

\begin{proof}[Proof of Theorem~\ref{thm:main}(\ref{f_star_unimod_triang_last_quarter})]
The inequality $f_{d-1}^*>f_d^*$ holds by Theorem~\ref{h* degree}.
Now, let $ \left\lfloor \frac{3d}{4} \right\rfloor+1\leq k < d$. By (\ref{f* and h*}),
   \begin{equation}\label{part 0}f_{k-1}^*- f_{k}^* 
   \ = \ \sum_{j=0}^{k+1} \left(\binom{d+1-j}{k-j}-\binom{d+1-j}{k+1-j}\right)h_j^* \, .\end{equation}    
The difference $\binom{d+1-j}{k-j}-\binom{d+1-j}{k+1-j}$ is nonnegative whenever $k-j\geq \lfloor \frac{d+1-j}{2} \rfloor$ and negative otherwise, i.e., the difference is nonnegative whenever $j\leq 2k-d$ and negative whenever $j> 2k-d$.
Since 
$2d-2k<2k+1-d$ for $\lfloor \frac{3d}{4} \rfloor+1\leq k $,
from \eqref{part 0} we obtain 
  \begin{align}
  f_{k-1}^*- f_{k}^*
  \ &\ge \ \sum_{j=0}^{2d-2k} \left(\binom{d+1-j}{k-j}-\binom{d+1-j}{k+1-j}\right)h_j^* \label{part 1a}
 \\ &\quad + \sum_{j=2k+1-d}^{k+1} \left(\binom{d+1-j}{k-j}-\binom{d+1-j}{k+1-j}\right)h_j^*\label{part 1b}
     \end{align}
     where the differences appearing in \eqref{part 1a} are nonnegative and the ones in
\eqref{part 1b} are negative. Our aim is to compare the sums in \eqref{part 1a} and
\eqref{part 1b} to conclude that $f_{k-1}^*- f_{k}^*$ is positive.

Using standard identities 
for binomial coefficients, 
the right hand-side of \eqref{part 1a} equals
     \begin{align*}
&\sum_{j=0}^{2d-2k} \left(\sum_{l=j}^{2d-2k-1}
\left(\binom{d-l}{k-l}-\binom{d-l}{k+1-l}\right)+
\left(\binom{2k-d+1}{3k-2d}-\binom{2k-d+1}{3k-2d+1}\right) \right)h_j^* \\
 &= \sum_{l=0}^{2d-2k-1}\left( \left(\binom{d-l}{k-l}-\binom{d-l}{k+1-l}\right)
\sum_{j=0}^{2d-2k-1-l}h_j^*\right) \\
&\qquad {} + \left(\binom{2k-d+1}{3k-2d}-\binom{2k-d+1}{3k-2d+1}\right) \sum_{j=0}^{2d-2k}h_j^*,
     \end{align*}
     hence we conclude that right hand-side of \eqref{part 1a} is bounded below by \\
     \begin{align}
&\left(\binom{d}{k}-\binom{d}{k+1}\right) h_0^* +
\left(\binom{2k-d+1}{3k-2d}-\binom{2k-d+1}{3k-2d+1}\right) \sum_{j=0}^{2d-2k}h_j^* \nonumber
        \\ &> \left(\binom{2k-d+1}{3k-2d}-\binom{2k-d+1}{3k-2d+1}\right) \sum_{j=0}^{2d-2k}h_j^*\label{part 5a}
    \end{align}
since $\binom{d}{k}-\binom{d}{k+1}>0$ for $\left\lfloor\frac{3d}{4} \right\rfloor+1\leq k < d$, 
and $h_0^*=1, h_j^*\geq 0$ for $j=1,...,2d-2k-1$.

On the other hand, for the differences appearing in \eqref{part 1b}, using that $2d-2k<j $ and $j\leq k+1$, it follows by Lemma \ref{lem:binomdiff}  that
\[ \left|
\binom{d+1-(2d -2k)}{d+1-k}-\binom{d+1-(2d -2k)}{d-k}
\right|\geq \left|\binom{d+1-j}{d+1-k}-\binom{d+1-j}{d-k}\right|,\]
i.e.,
\[ \left|
\binom{2k-d+1}{3k-2d}-\binom{2k-d+1}{3k-2d+1}
\right|\geq \left|\binom{d+1-j}{k-j}-\binom{d+1-j}{k+1-j}\right|.\] 
Hence for $j\geq 2k+1-d$,
\[
-\left(\binom{2k-d+1}{3k-2d}-\binom{2k-d+1}{3k-2d+1}\right)
\leq
\binom{d+1-j}{k-j}-\binom{d+1-j}{k+1-j} \, .\] 
Since both $-\binom{d+1-j}{k-j}+\binom{d+1-j}{k+1-j}$ and $h_j^*$ are nonnegative for
$j\geq 2k+1-d$, the sum in \eqref{part 1b} is bounded below by
     \begin{align}\label{last part} 
     -\left(\binom{2k-d+1}{3k-2d}-\binom{2k-d+1}{3k-2d+1}\right)
     \sum_{j=2k+1-d}^{d}h_j^* \, .\end{align}
Now \eqref{part 5a} and \eqref{last part} yield 
\[f_{k-1}^*- f_{k}^*> \left(\binom{2k-d+1}{3k-2d}-\binom{2k-d+1}{3k-2d+1}\right)
\left( \sum_{j=0}^{2d-2k}h_j^*-\sum_{j=2k+1-d}^{d}h_j^*\right).\]
Hibi \cite{hibi} showed that the inequality
\begin{equation}\label{eq:Hibi}   
\sum_{j=0}^{m+1}  h^{*}_j \ \geq \ \sum_{j=d-m}^d h^{*}_j
\end{equation} holds for $ m=0,...,\lfloor\frac d 2\rfloor-1$.
Since $ 2d-2k-1\leq\lfloor \frac{d}{2}\rfloor-1$ 
for $\left\lfloor \frac{3d}{4} \right\rfloor+1\leq k$, we can use \eqref{eq:Hibi}
to finally obtain
\[f_{k-1}^*- f_{k}^*>0 \, . \qedhere \]
\end{proof}

\begin{proof}[Proof of Theorem~\ref{thm:lowdimunimodal}]
If $d=1$ or $2$, there is nothing to prove.

If $3\leq d\leq 6$, then  
by Theorem \ref{thm:main},  either
\[
f^*_{0}\leq \cdots \leq f^*_{\left\lfloor \frac{d}{2}\right\rfloor} \geq f^*_{\left\lfloor \frac{3d}{4} \right\rfloor} \geq\cdots \geq f^*_d\]
or
\[
f^*_{0}\leq \cdots \leq f^*_{\left\lfloor \frac{d}{2}\right\rfloor} \leq f^*_{\left\lfloor \frac{3d}{4} \right\rfloor} \geq\cdots \geq f^*_d.\]

For $7\leq d\leq 13$,
 we will show that if 
$ f^*_{i}\geq  f^*_{i+1}, $ then $f^*_{i+1}\geq f^*_{i+2}$, for all $\left\lfloor \frac{d}{2}\right\rfloor\leq i\leq \left\lfloor \frac{3d}{4} \right\rfloor-2$. By Theorem \ref{thm:main}, this will imply the unimodality of $(f_0^*,f_{1}^*,...,f_d^*)$.

 We will examine each value of $d$ separately.
 
Suppose that $d=7$ and $f^*_{3}\geq f^*_{4}$. Then, by (\ref{f* and h*}), we compute \[2 f^*_{4}-f^*_{3}-  f^*_{5}=
14h_0^*+14h_1^*+10h_2^*+5h_3^*+h_4^*-h_5^*-h_6^*>h^*_0+h^*_1+h^*_2+h^*_3-h^*_5-h^*_6-h^*_7,\]
which is always nonnegative by \eqref{eq:Hibi}. Hence $ f^*_4-f^*_5\geq f^*_3-f^*_4$.

Likewise, for $d=8$, \eqref{eq:Hibi} implies that
 \[2f^*_{5}-f^*_{4}-f^*_6=6h_0^*+14h_1^*+14h_2^*+10h_3^*+5h_4^*+h_5^*-h_6^*-h_7^*> h^*_0+h^*_1+h^*_2+h^*_3-h^*_6-h^*_7-h^*_8\geq 0.\]

For $d=9$, we similarly get
\begin{align*}&  f^*_{5}-f^*_6-2(f^*_{4}-f^*_5)=
6h^*_0+42h^*_1+42h^*_2+28h^*_3+13h^*_4+3h^*_5-h^*_6-h^*_7>\\ &  h^*_0+h^*_1+h^*_2+h^*_3+h^*_4-h^*_6-h^*_7-h^*_8-h_9^*\geq 0\end{align*}
by \eqref{eq:Hibi}.

A similar argument works for $d=10$. By \eqref{eq:Hibi}, 
\begin{align*}&2f^*_{6}-f^*_{5}-f^*_7=
33h^*_0+48h^*_1+42h^*_2+28h^*_3+14h^*_4+4h^*_5-h^*_6-2h^*_7-h^*_8
>\\ & 2(h^*_0+h^*_1+h^*_2+h^*_3+h^*_4+h^*_5-h^*_6-h^*_7-h^*_8-h^*_9-h^*_{10})\geq0.\end{align*}
 
 For $d=11$, we need to consider two values: $i=5$ and $i=6$. The claim follows again by \eqref{eq:Hibi}, since
 \begin{align*}&  f^*_{6}-f^*_7-2(f^*_{5}-f^*_6)=
33h^*_0+132h^*_1+126h^*_2+84h^*_3+42h^*_4+14h^*_5+h^*_6-2h^*_7-h^*_8>\\ &  2(h^*_0+h^*_1+h^*_2+h^*_3+h^*_4+h^*_5-h^*_7-h^*_8-h_9^*-h^*_{10}-h_{11}^*)\geq 0,\end{align*}
and
 \begin{align*}&  f^*_{7}-f^*_8-\frac{4}{5}(f^*_{6}-f^*_7)
>
3(h^*_0+h^*_1+h^*_2+h^*_3+h^*_4+h^*_5-h^*_7-h^*_8-h^*_9-h_{10}^*-h_{11}^*)
\geq 0.\end{align*}

For $d=12$, there are also two cases: $i=6$ and $i=7$. Using \eqref{eq:Hibi}, it follows that
 \begin{align*}&  f^*_{7}-f^*_8-\frac{5}{4}(f^*_{6}-f^*_7)
> \\ &
3(h^*_0+h^*_1+h^*_2+h^*_3+h^*_4+h^*_5+h^*_6-h^*_7-h^*_8-h^*_9-h_{10}^*-h_{11}^*-h^*_{12})
\geq 0,\end{align*}
and
 \begin{align*}&  f^*_{8}-f^*_9-\frac{1}{2}(f^*_{7}-f^*_8)
> \\ &
3(h^*_0+h^*_1+h^*_2+h^*_3+h^*_4+h^*_5+h^*_6-h^*_7-h^*_8-h^*_9-h_{10}^*-h_{11}^*-h^*_{12})
\geq 0.\end{align*}

For $d=13$, we employ a stronger form of \eqref{eq:Hibi}. The expression
 \begin{align*}&  f^*_{7}-f^*_8-\frac{7}{3}(f^*_{6}-f^*_7)
\geq \\ &
3(h^*_1+h^*_2+h^*_3+h^*_4+h^*_5+h^*_6-h^*_7-h^*_8-h^*_9-h^*_{10}-h_{11}^*-h_{12}^*-h^*_{13})
\end{align*} is nonnegative by Theorem (6) in \cite{stapledondelta}.

Similarly, using Theorem (6) in \cite{stapledondelta} we have
 \begin{align*}&  2f^*_{8}-f^*_7-f^*_{9}
\geq \\ &
4(h^*_1+h^*_2+h^*_3+h^*_4+h^*_5+h^*_6-h^*_7-h^*_8-h^*_9-h^*_{10}-h_{11}^*-h_{12}^*-h^*_{13})
\geq 0.\end{align*}

To construct a polytope with nonunimodal $f^*$-vector, we employ a
family of simplices introduced by Higashitani~\cite{higashitanicounterex}.
Concretely, denote the $j$th unit vector by $e_j$ and let
\[\Delta_w \ := \ \text{conv}\big\{0,e_1,e_2,...,e_{14},w\big\}\]
where
\[ w \ := \
(\underset{\text{7}}{\underbrace{1,1,\ldots,1}},\underset{\text{7}}{\underbrace{131,131,\ldots,131}},132)
\, .\]
It has $h^*$-vector $$
(1,\underset{\text{7}}{\underbrace{0,0,\ldots,0}},131,\underset{\text{7}}{\underbrace{0,0,\ldots,0}})$$
and, via \eqref{f* and h*}, $f^*$-vector
\begin{align*}
    & (16, 120, 560, 1820, 4368, 8008, 11440, 13001,\\
    &\quad 12488, 11676, 11704, 10990, 7896, 3788, 1064, 132) \, . \qedhere
\end{align*}
\end{proof}

\begin{corollary}\label{small h*} Let $P$ be a $d$-dimensional lattice polytope such that
the $h^*$-vector of $P$ is of degree at most $5$.
Then $P$ has unimodal $f^*$-vector.
\end{corollary}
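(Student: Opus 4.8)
The plan is to obtain this by combining the endpoint monotonicity results already proved with a single second-difference estimate, after peeling off two easy reductions. If $P$ has degree $0$ it is a unimodular simplex, whose $f^*$-vector $\bigl(\binom{d+1}{1},\binom{d+1}{2},\dots,\binom{d+1}{d+1}\bigr)$ from Example~\ref{eg} is a truncated row of Pascal's triangle and hence evidently unimodal; and for $d\le 13$ unimodality holds by Theorem~\ref{thm:lowdimunimodal}. So I may assume $d\ge 14$ and that the degree $s$ of $P$ satisfies $1\le s\le 5$.

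With this in place, Theorem~\ref{thm:main}\eqref{f_star_unimod_triang_first_half} shows that $f^*$ is nondecreasing on $\{0,1,\dots,\lfloor d/2\rfloor\}$, while Theorem~\ref{h* degree} applied with $s=5$ (the degree is not $0$, so its exceptional case does not occur) gives $f^*_{k-1}>f^*_k$ for $\lceil(d+5)/2\rceil\le k\le d$; since $\lceil(d+5)/2\rceil-1=\lfloor d/2\rfloor+2$, this means $f^*$ is strictly decreasing on $\{\lfloor d/2\rfloor+2,\dots,d\}$. Thus the only two steps whose direction is not yet forced are those around the index $\lfloor d/2\rfloor+1$, and the sole obstruction to unimodality is a ``valley'' $f^*_{\lfloor d/2\rfloor}>f^*_{\lfloor d/2\rfloor+1}<f^*_{\lfloor d/2\rfloor+2}$. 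This cannot occur once $2f^*_{\lfloor d/2\rfloor+1}\ge f^*_{\lfloor d/2\rfloor}+f^*_{\lfloor d/2\rfloor+2}$, since that inequality makes the drop $f^*_{\lfloor d/2\rfloor+1}-f^*_{\lfloor d/2\rfloor+2}$ at least as large as $f^*_{\lfloor d/2\rfloor}-f^*_{\lfloor d/2\rfloor+1}$. So it suffices to establish this concavity estimate.

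Finally, this estimate is precisely the $i=0$ instance of what is already proved inside the proof of Proposition~\ref{proposition for pyramid} with $s=5$: by \eqref{f* and h*} its two sides differ by a sum $\sum_{j=0}^{5}c_j h^*_j$ in which each $c_j$ is a positive multiple of the quantity \eqref{second part} evaluated at $i=0$. For $d\ge 14$ the product $(\lfloor d/2\rfloor+2-j)(\lfloor d/2\rfloor+3-j)$ is positive for $0\le j\le 5$, and \eqref{second part} at $i=0$ equals $d-j^2+5j-6\ge d-6$ when $d$ is even and $d-j^2+3j-1\ge d-11$ when $d$ is odd, hence is positive for $0\le j\le 5$; together with $h^*_j\ge 0$ this yields the inequality and completes the argument. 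The conceptual heart is the second paragraph: a degree bound of $5$ forces the ``undetermined'' middle band of the $f^*$-vector to have width one, so a single second-difference estimate closes it — for larger degree bounds the band widens and one genuinely needs the inductive mechanism of Proposition~\ref{proposition for pyramid} (or of Theorem~\ref{thm:lowdimunimodal}). The only fussy point is remembering to dispatch the degree-$0$ case separately, on account of the exception in Theorem~\ref{h* degree}.
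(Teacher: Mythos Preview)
Your proof is correct and follows essentially the same route as the paper's: reduce to $d\ge 14$ via Theorem~\ref{thm:lowdimunimodal}, use Theorems~\ref{thm:main}\eqref{f_star_unimod_triang_first_half} and~\ref{h* degree} to narrow the unknown region to the single step around $\lfloor d/2\rfloor+1$, and close it with the $i=0$ concavity computation from Proposition~\ref{proposition for pyramid}. Your explicit treatment of the degree-$0$ exception in Theorem~\ref{h* degree} is a small point of extra care the paper glosses over, and your parity-split bounds $d-6$ and $d-11$ are slightly sharper than the paper's uniform $d-15$, but otherwise the arguments coincide.
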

\begin{proof}
We know from Theorem \ref{thm:lowdimunimodal} that $f^*$ is unimodal when $d\leq 13$. 

Suppose that $d\geq 14$. The proof is similar to the proof of Proposition \ref{proposition for pyramid}, but we need to be a bit more precise with bounds.
By Theorems \ref{thm:main}\eqref{f_star_unimod_triang_first_half} and \ref{h* degree}, it suffices to show that  $f^*_{\lfloor \frac{d}{2}\rfloor+i}\geq f^*_{\lfloor \frac{d}{2}\rfloor+i+1}$ implies $f^*_{\lfloor \frac{d}{2}\rfloor+i+1}\geq f^*_{\lfloor \frac{d}{2}\rfloor+i+2}$, for $i=0,...,\lceil \frac{d+5}{2}\rceil-\lfloor \frac{d}{2}\rfloor-3$. Notice that $\lceil \frac{d+5}{2}\rceil=\lfloor \frac{d}{2}\rfloor+\lceil \frac{5}{2}\rceil$, hence $i=0.$ Arguing as in the proof of Proposition \ref{proposition for pyramid}, we can reduce the proof to showing that the expression in \eqref{second part} in  Proposition \ref{proposition for pyramid} is nonnegative for $0\leq j\leq 5$ and $i=0$, i.e., that
\begin{align}\label{critical
line}d-(2j-1)\left(\left\lceil\frac{d}{2}\right\rceil-\left\lfloor\frac{d}{2}\right\rfloor\right)- 6+j(5 -j)\ \geq \ 0 \, .\end{align} 
For $0\leq j\leq s$, we have
\[d-(2j-1)\left(\left\lceil\frac{d}{2}\right\rceil-\left\lfloor\frac{d}{2}\right\rfloor\right)- 6+j(5 -j)\ \geq \ d-15 \, ,\] hence \eqref{critical line} holds if $d\geq 15$. 
Finally, if $d=14$ then \eqref{critical line} holds because
\[d-(2j-1)\left(\left\lceil\frac{d}{2}\right\rceil-\left\lfloor\frac{d}{2}\right\rfloor\right)- 6+j(5 -j)\ \geq \ d-6 \, .\qedhere\]
\end{proof}



\begin{proof}[Proof of Theorem~\ref{thm:gorenstein}]
Let $s:=d+1-g$. We first consider the case that $s$ is odd; the case $s$ even will be
similar.
Since $h^*_j = 0$ for $j > s$ and $h^*_j = h^*_{ s-j }$, 
\begin{align*}
  &f_{ k-1 }^* - f_k^*
  \ = \ \sum_{j=0}^s \left( \binom{d-j+1}{k-j} - \binom{d-j+1}{k-j+1} \right) h_j^* \\
  &\quad= \ \sum_{j=0}^{ \lfloor \frac s 2 \rfloor } \left( \binom{d-j+1}{k-j} -
\binom{d-j+1}{k-j+1} \right) h_j^* + \sum_{j=\lfloor \frac s 2 \rfloor + 1}^s \left( \binom{d-j+1}{k-j} - \binom{d-j+1}{k-j+1} \right) h_j^* \\
  &\quad= \ \sum_{j=0}^{ \lfloor \frac s 2 \rfloor } \left( \binom{d-j+1}{k-j} -
\binom{d-j+1}{k-j+1} + \binom{d-s+j+1}{k-s+j} - \binom{d-s+j+1}{k-s+j+1} \right) h_j^* \, .
\end{align*}
Because we assume $k \ge \frac 1 2 (d + 1 + \lfloor \frac s 2 \rfloor)$,
\[
  \binom{d-j+1}{k-j} - \binom{d-j+1}{k-j+1} \ > \ 0 
\]
for $0 \le j \le \lfloor \frac s 2 \rfloor$.  
The inequality  \[ \binom{d-j+1}{k-j} - \binom{d-j+1}{k-j+1} + \binom{d-s+j+1}{k-s+j} -
\binom{d-s+j+1}{k-s+j+1}>0\] follows directly if
 $\binom{d-s+j+1}{k-s+j} -
\binom{d-s+j+1}{k-s+j+1}\geq 0$ or $k-s+j+1<0$.
Otherwise, Lemma~\ref{lem:binomdiff} implies that, for the same range of $j$,
\[
  \binom{d-j+1}{k-j} - \binom{d-j+1}{k-j+1} + \binom{d-s+j+1}{k-s+j} -
\binom{d-s+j+1}{k-s+j+1} \ \ge \ 0 \, .
\]
In fact, the last inequality is strict for $k \ge \frac 1 2 (d + 1 + \lfloor \frac s 2 \rfloor)$, as seen in the proof of Lemma~\ref{lem:binomdiff}.
Finally we use that $h_j^* \ge 0$ and $h_0^*=1$ to deduce that $f_{ k-1 }^* - f_k^* > 0$.

The computations in the case $s$ even is very similar. Now we write
\begin{align*}
  &f_{ k-1 }^* - f_k^*
  \ = \ \sum_{j=0}^s \left( \binom{d-j+1}{k-j} - \binom{d-j+1}{k-j+1} \right) h_j^* \\
  &\quad= \ \sum_{j=0}^{ \frac s 2 - 1 } \left( \binom{d-j+1}{k-j} - \binom{d-j+1}{k-j+1} +
\binom{d-s+j+1}{k-s+j} - \binom{d-s+j+1}{k-s+j+1} \right) h_j^* \\
  &\quad \qquad {} + \left( \binom{ d - \frac s 2 + 1 }{ k - \frac s 2 } - \binom{ d -
\frac s 2 + 1 }{ k - \frac s 2 + 1 } \right) h_{ \frac s 2 }^*
\end{align*}
and use the same argumentation as in the case $s$ odd.
\end{proof}


\section{Concluding Remarks}

There are many avenues to explore $f^*$-vectors, e.g., along analogous studies of
$h^*$-vectors, and we hope the above results form an enticing starting point.
We conclude with a few open questions which are apparent from the above.

The techniques in our proof of Theorem~\ref{thm:lowdimunimodal} do not offer much insight in the case of $14$-dimensional lattice polytopes as there are candidates $f^*$-vectors with corresponding  $h^*$-vectors that satisfy all inequalities discussed in \cite{stapledondelta}. It is unknown though if such polytopes exist.

Higashitani~\cite[Theorem 1.1]{higashitanicounterex} provided examples of $d$-dimensional
polytopes with nonunimodal $h^*$-vector for all $d\geq3$. Therefore, by
Theorem~\ref{thm:lowdimunimodal} we have examples of polytopes that have such
$h^*$-vector but their $f^*$-vector is unimodal. It would be interesting to know if the
opposite can be true, that is, if there exist polytopes with unimodal $h^*$-vector and
nonunimodal $f^*$-vector. By Corollary~\ref{small h*}, such polytopes would need to have degree at least~6.

\bibliographystyle{plain}
\bibliography{bbl}

\end{document}